\DeclareMathOperator\R{\mathbb{R}}
\DeclareMathOperator\M{\mathcal{M}}
\DeclareMathOperator\argmin{argmin}
\newcommand*\prob{\mathop{}\!\mathscr{P}}
\newcommand{\p}{\varrho}
\newcommand*\diff{\mathop{}\!\mathrm{d}}
\newtheorem{theorem}{Theorem}[section]
\newtheorem{lemma}[theorem]{Lemma}
\theoremstyle{definition}
\newtheorem{definition}[theorem]{Definition}
\theoremstyle{remark}
\newtheorem{remark}[theorem]{Remark}
\numberwithin{equation}{section}
\title{Fisher information and continuity estimates for nonlinear but 1-homogeneous diffusive PDEs (via the JKO scheme)}
\author[1]{Thibault Caillet}
\author[1]{Filippo Santambrogio}
\affil[1]{\scriptsize Universite Claude Bernard Lyon 1, ICJ UMR5208, CNRS, Ecole Centrale de Lyon, INSA Lyon, Universit\'e Jean Monnet,
69622 Villeurbanne, France. {\tt caillet,santambrogio@math.univ-lyon1.fr}}
\date{\today}
\begin{document}

\maketitle
\begin{abstract} 
In this short paper we prove, using the JKO scheme, that quantities such as the Fisher information stay bounded or decrease across time for a family of $1$-homogeneous diffusive PDEs. As a corollary, we prove that moduli of continuity are conserved across time for the solutions of those PDEs. 
\end{abstract}

\section{Introduction}
The now classical JKO scheme, introduced in the seminal paper \cite{JKO}, and generalized later by many authors, is an approximation scheme used to construct solutions to nonlinear diffusion equations of the type 
\begin{align*}
    \partial_t \p = \nabla \cdot (\p \nabla c^*(\nabla f'(\p))),
\end{align*}
where $c:\R^d \to \R_+ \cup \{+\infty\}$ and $f : \R_+ \to \R\cup \{+\infty\}$  are convex functions. Among examples of PDEs which have this form, we can obtain equations of the form $\partial_t\p=\Delta_p(\phi(\rho))$ (which are sometimes written in the form $\partial_t(\beta(u))=\Delta_p u$, using $u=\phi(\rho)$ and $\beta=\phi^{-1}$). These equations are usually called {\it doubly nonlinear}: they are a combination of porous-medium type equations (of the form $\partial_t\rho=\Delta(\phi(\rho))$, see \cite{Vaz}) and parabolic $p-$Laplacian equations (of the form $\partial_t u=\Delta_p u$).

The (generalized) JKO scheme consists in building a time discrete approximation of such equations, which is of the form 
\begin{align*}
        \p_{k+1} ^h =\argmin_{\rho} W_{c_h}  (\rho,\p_k ^h) + \mathscr{F}(\rho),
\end{align*}
where $h>0$ is the discrete time step, $W_{c_h}$ is the transport cost for the cost $c_h = hc(\frac{\cdot}{h})$ (when $c(z)=\frac 1p|z|^p$ we obtain $W_{c_h}=\frac{1}{ph^{p-1}}W_p^p$) and $\mathscr{F}(\p) = \int f(\p(x)) \diff x$. 
To prove the convergence of this scheme, it is often useful to derive estimates on the discrete solutions $(\p^h _k)$ to gain some compactness that allows to pass to the limit, as it is done for example in \cite{Agueh,Otto,ContractionPrinciple} and many other papers. Some other estimates, derived on the discrete scheme, may be used to study the behaviour of the limit continuous PDE, see for instance \cite{IacPatSan,FokkerPlanckLp,CailletSantambrogioDoubly}. In \cite{L2H2Convergence}, the authors use discrete estimates obtained on the scheme to prove it stronger convergence in the case of the Fokker-Planck equation than what was originally proven. This proof used, for instance, a bound on the H\"{o}lder norms, obtained from a non-optimal $W^{1,p}$ assumption on the initial condition.

We restrict our analysis to the case where $\mathscr{F}(\p) = \mathscr{E}(\p) = \int \p \log (\p) \diff x$, meaning $ \nabla f'(\p)=\nabla \log(\p)$. This equation is in some sense the most linear one, for fixed cost $c$, among the doubly non-linear equations that we presented at the beginning. Its almost linear behavior lies in the fact that with this logarithmic choice the equation becomes 1-homogeneous: if $\rho$ is a solution, then $\lambda\rho$ as well for any $\lambda>0$. This class of equations has already been widely studied, and we cite for instance \cite{Ish} for an existing result( the relevant case in such a paper is case II), \cite{DPDolGen} for its connections with functional inequalities, and \cite{Agu03,Agu08} for the study of the trend to equilibrium (the 1-homogeneous case is called in \cite{Agu08} ``Generalized heat equation''. As an example of this class of equations, we can think at the following one: $\partial_t\rho=\Delta_p(\rho^{q-1}),$ where $p$ and $q$ are conjugate exponents (so that $(p-1)(q-1)=1$).

For this very class of PDEs we can adapt results from \cite{FokkerPlanckLp} which readily generalize in our context thanks to the recent extension of the so called "five gradients inequality" (see \cite{FiveGradsIneq,DiMMurRad}) to other transport costs than the quadratic one. We prove that the Fisher information, as well as many generalizations of this quantity, decreases along the scheme, and therefore also decreases in time for the limit PDE. We underline that a direct computation of the time-derivative of the Fisher information on the limit equation does not show in a straightforward way that these quantities decrease in time. This reminds a similar result (on very different equations) recently presented in \cite{guillen2023landau}: in such a paper the authors shows that the fisher information decreases but their analysis is based on clever tricks as its time derivative was not easily seen to be negative.

As a limit case, we also obtain that the Lipschitz constant of $\log\rho$ is preserved across the iterations. This in turn implies that any modulus of continuity of $\log\rho$ is also preserved along iterations of the scheme, and therefore pass to the limit in the time continuous PDE.  This analysis is done first for more classical cost functions $c$ which are finite everywhere and then extended to the so-called "relativistic" costs used for example in the construction of solutions to the relativistic heat equation (see \cite{McCPue})
\begin{align*}
    \partial_t \p = \nabla \cdot \left( \p \,\frac{\nabla \p}{\sqrt{\p ^2 + |\nabla \p|^2}} \right) .
\end{align*}

In the whole paper, we will assume that $\Omega$ is the closure of an open bounded convex subset of $\R^d$.
We take $c: \R^d \to \R_+$ to be $C^1$, strictly convex, radially symmetric and such that $c(0)=0$. Radial symmetry can be removed from the assumptions, as it will be explained later, and the finiteness of the cost will be removed in Section 3.2.
\begin{comment}
    \begin{enumerate}
    \item $0 = c(0) < c(z)$ for $z \neq 0$,
    \item $\lim_{|z| \to +\infty} \frac{c(z)}{|z|}=+ \infty$,
    \item there exists $\alpha,\beta >0$ such that $\alpha |z|^p \leq c(z) \leq \beta (1+ |z|^p)$
\end{enumerate}
\end{comment}

\section{Preliminaries}

We refer to \cite{AGS,OTAM,VillaniO&N} for the general theory of optimal transport and its application to gradient flows, and we compile below a selection of helpful facts we will use in the sequel. 

\begin{theorem}
    \label{everythingtheorem}
    Let $\p, g \in \M _+(\Omega)$ be two non-negative measures on $\Omega$. The following statements are classical~:
    \begin{enumerate}
        \item The problem
        \begin{align*}
            W_c(\mu,\nu) := \min \left\{\int_{\Omega\times \Omega} c(x-y) \diff \gamma \quad ; \quad \gamma \in \Pi(\mu,\nu) \right\},
        \end{align*}
        where $\Pi(\mu,\nu)$ is the set of non-negative measures on $\Omega\times \Omega$ with first marginal $\mu$ and second marginal $\nu$, admits a solution $\gamma^*$. If $\mu=\p \diff x$, with $\p \in L^1 (\Omega)$, then the solution is unique, and given by $\gamma^*=(\textnormal{id},T)_\# \p$ for some $T: \Omega \to \Omega$ which is called the optimal transport map. 
        \item We have 
        \begin{align*}
             W_c (\mu, \nu) = \max \left\{ \int_\Omega \varphi \diff \mu + \int_\Omega \psi \diff \nu \quad ; \quad \varphi(x) + \psi(y) \leq c(x-y)  \; \forall x,y\in \Omega \right\}.
        \end{align*}
        The optimal $\varphi$ and $\psi$ are called Kantorovich potentials, and they can be taken $c$-concave, satisfying
        \begin{align*}
            &\varphi(x)=\inf_{y\in \Omega} c(x-y) - \psi(y),
            &\psi(y)=\varphi^c (y):=\inf_{x\in \Omega} c(x-y) - \varphi(x),
        \end{align*}
        from which one can prove that $\varphi$ and $\varphi^c$ are Lipschitz continuous. 
        \item Since $c$ is strictly convex, $c^* \in C^1$ and the optimal transport map $T$ is given by
        \begin{align*}
            T(x) = x- \nabla c^*(\nabla \varphi (x)),
        \end{align*}
        where $\varphi$ is any Kantorovich potential. It is well defined almost everywhere since a Lipschitz function is differentiable almost everywhere.
    \end{enumerate}
\end{theorem}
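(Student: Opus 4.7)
These statements are entirely classical and the plan is essentially to invoke standard references such as \cite{OTAM,VillaniO&N,AGS}. I outline the key ingredients below.

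For part (1), I would first obtain existence by the direct method of the calculus of variations: the set $\Pi(\mu,\nu)$ is tight (being supported in the compact set $\Omega\times\Omega$), hence narrowly precompact by Prokhorov, and $\gamma\mapsto \int c(x-y)\,\diff\gamma$ is narrowly lower semicontinuous because $c$ is continuous and bounded below on the compact set $\Omega-\Omega$. Uniqueness of the optimal plan and its representation as $(\mathrm{id},T)_\#\p$ when $\mu=\p\,\diff x$ relies on the Gangbo–McCann argument: once a $c$-concave Kantorovich potential $\varphi$ has been obtained from part (2), one uses $c$-cyclical monotonicity of $\mathrm{spt}\,\gamma^*$ together with Rademacher differentiability of the Lipschitz function $\varphi$. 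The strict convexity of $c$ (and hence $C^1$ regularity of $c^*$) forces, for $\p$-a.e.\ $x$, a unique $y$ such that $(x,y)\in\mathrm{spt}\,\gamma^*$, yielding both the map structure and the uniqueness of $\gamma^*$.

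For part (2), the standard route is Fenchel–Rockafellar duality applied to the linear problem with convex constraint $\gamma\in\Pi(\mu,\nu)$, $\gamma\geq 0$; this produces the dual problem and the absence of duality gap. The representation via $c$-transforms is obtained by the usual double iteration: replacing $\psi$ by $\varphi^c$ and then $\varphi$ by $(\varphi^c)^c$ can only increase the dual objective while keeping admissibility, and the process stabilizes on $c$-concave pairs. For Lipschitz continuity, I would exploit that $\varphi(x)=\inf_{y\in\Omega} c(x-y)-\psi(y)$ is an infimum of translates of $c$; since $c\in C^1$ and $\Omega-\Omega$ is compact, $c$ is Lipschitz there, and $\varphi$ inherits the same Lipschitz constant. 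The same argument applies to $\varphi^c$.

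For part (3), the key observation is that if $(x,T(x))\in\mathrm{spt}\,\gamma^*$, then by the equality case of the duality inequality
\begin{align*}
    \varphi(x)+\psi(T(x))=c(x-T(x)),\qquad \varphi(z)+\psi(T(x))\leq c(z-T(x))\;\;\forall z\in\Omega,
\end{align*}
so the function $z\mapsto c(z-T(x))-\varphi(z)$ attains its minimum at $z=x$. At any point of differentiability of $\varphi$ (which by Rademacher holds $\p$-a.e.) the first-order condition yields $\nabla\varphi(x)=\nabla c(x-T(x))$. Inverting via $\nabla c^*=(\nabla c)^{-1}$, which is justified by strict convexity and $C^1$-regularity of $c$, gives exactly $T(x)=x-\nabla c^*(\nabla\varphi(x))$.

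The main obstacle, conceptually, is hidden in part (1): the uniqueness statement and the Monge structure rely on the full Gangbo–McCann machinery, which bootstraps from the dual characterization in (2) and the formula derived in (3). The rest is a relatively formal chain of convex-analytic manipulations, and in the spirit of the paper I would simply cite \cite{OTAM,VillaniO&N,AGS} for the detailed proofs.
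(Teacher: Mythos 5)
Your proposal is correct and matches the paper's treatment: the paper gives no proof of this theorem at all, simply declaring the statements classical and deferring to \cite{AGS,OTAM,VillaniO&N}, which is exactly what you do, and the additional outline you provide (direct method for existence, duality and $c$-transforms for the potentials, the first-order condition $\nabla\varphi(x)=\nabla c(x-T(x))$ for the map, and the Gangbo--McCann argument for uniqueness) is the standard and correct route. No issues to report.
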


\begin{definition}
    We define $\mathscr{E}: \prob{} (\Omega) \to \R \cup \{+\infty\}$ by 
    \begin{align*}
        \mathscr{E}(\mu) := 
        \begin{cases}
            \int_\Omega \p \log (\p) \diff x \quad &\text{if } \p \ll \diff x \text{ and } \p =\frac{\diff \mu}{\diff x} \\
            +\infty &\text{otherwise}
        \end{cases}
    \end{align*}
\end{definition}
Since we will deal only with absolutely continuous measures, we will use the usual abuse of notation of denoting $\mu$ by its density $\p$.

\begin{theorem}
\label{propsPi}
Let $g\in \M(\Omega)$ be a non-negative measure, and set $c_h(z):= hc(z/h)$. The problem
    \begin{align}
\label{jkoproblem}
\p \in \argmin_{\rho} W_{c_h}  (\rho,g) + \mathscr{E}(\rho)
\end{align}
admits a unique solution which we denote $\Pi(g)$. Given $g_1,g_2$ two non-negative measures, if $g_1 \leq g_2$, then $ \Pi(g_1) \leq \Pi(g_2)$, and for $\lambda>0$, $\Pi(\lambda g)=\lambda \Pi(g)$.
\end{theorem}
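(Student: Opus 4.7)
The plan is to handle the three claims in sequence, starting with the easier ones.

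For existence and uniqueness, I would apply the direct method to $J(\rho) := W_{c_h}(\rho,g)+\mathscr{E}(\rho)$. Finiteness of $W_{c_h}$ forces the mass constraint $\int\rho=\int g$, and on this admissible set $J$ is bounded below: $W_{c_h}\geq 0$, and Jensen's inequality on the bounded domain $\Omega$ yields a lower bound on $\mathscr{E}$ depending only on the prescribed mass. A minimizing sequence then has bounded entropy, which via de la Vallée–Poussin gives equi-integrability and hence weak $L^1$ compactness; the mass is preserved in the limit, and by lower semi-continuity of $W_{c_h}$ under narrow convergence and of $\mathscr{E}$, any weak limit minimizes $J$. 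Uniqueness follows from strict convexity of $\mathscr{E}$ combined with convexity of $\rho\mapsto W_{c_h}(\rho,g)$, itself the infimum of a linear functional over the convex set of plans with fixed second marginal $g$.

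For the homogeneity $\Pi(\lambda g)=\lambda\Pi(g)$, the plan is a direct scaling calculation: multiplying an optimal plan from $(\rho,g)$ by $\lambda$ gives an admissible plan for $(\lambda\rho,\lambda g)$, so $W_{c_h}(\lambda\rho,\lambda g)=\lambda W_{c_h}(\rho,g)$, while
\begin{align*}
\mathscr{E}(\lambda\rho)=\lambda\mathscr{E}(\rho)+\lambda\log(\lambda)\int\rho\,\diff x.
\end{align*}
Writing $J_g$ for the functional associated to $g$ and $m=\int g$, one then obtains $J_{\lambda g}(\lambda\rho)=\lambda J_g(\rho)+\lambda\log(\lambda)\,m$; the extra term is constant over admissible competitors (all of which have mass $\lambda m$), so the minimizer of $J_{\lambda g}$ equals $\lambda$ times the minimizer of $J_g$.

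For the monotonicity, which I expect to be the main obstacle, I would start from the Euler-Lagrange condition. Taking variations $\rho_i+\varepsilon\eta$ with $\int\eta=0$ (to respect the mass of $\rho_i:=\Pi(g_i)$) produces a Kantorovich potential $\varphi_i$ from $\rho_i$ to $g_i$ for the cost $c_h$ and a Lagrange multiplier $\lambda_i$ with
\begin{align*}
\log\rho_i+1+\varphi_i=\lambda_i \quad \text{a.e.,}
\end{align*}
equivalently $\rho_i=C_i e^{-\varphi_i}$. The natural goal is then to show that $g_2\geq g_1$ forces $\varphi_2\leq\varphi_1$ up to additive constant, from which $\rho_2\geq\rho_1$ follows. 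I would try to exploit the joint optimality of the pair $(\rho_1,\rho_2)$ by constructing carefully paired admissible perturbations respecting the two distinct masses $\int g_1<\int g_2$. The hard point is precisely this mismatch of masses: the naive lattice swap $\tilde\rho_1:=\rho_1\wedge\rho_2$, $\tilde\rho_2:=\rho_1\vee\rho_2$ satisfies the pointwise identity $\mathscr{E}(\tilde\rho_1)+\mathscr{E}(\tilde\rho_2)=\mathscr{E}(\rho_1)+\mathscr{E}(\rho_2)$, but $\int\tilde\rho_i\neq\int g_i$ in general so the corresponding transport costs are infinite. I therefore expect to need either an approximation step (e.g.\ reduce by homogeneity to the case $\int g_1=\int g_2$ and mollify, then use the stability of $\Pi$ under narrow convergence granted by uniqueness) or a dual comparison of Kantorovich potentials deduced from the sensitivity of $W_{c_h}(\rho_i,\cdot)$ in its second argument, so as to transfer the pointwise order $g_1\leq g_2$ into an order on the $\varphi_i$.
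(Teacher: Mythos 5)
Your treatment of existence, uniqueness, and the homogeneity $\Pi(\lambda g)=\lambda\Pi(g)$ is correct and essentially identical to the paper's: the direct method plus strict convexity of $\mathscr{E}$ and convexity of $\rho\mapsto W_{c_h}(\rho,g)$ for the first claim, and the scaling identity $J_{\lambda g}(\lambda\rho)=\lambda J_g(\rho)+\lambda\log(\lambda)\int g$ for the third. Nothing to object to there.

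The monotonicity claim, however, is a genuine gap: you correctly identify the obstacle (the lattice swap $\rho_1\wedge\rho_2$, $\rho_1\vee\rho_2$ destroys the mass constraints, and the Euler--Lagrange route would require an ordering of the Kantorovich potentials that does not follow in any obvious way from $g_1\leq g_2$, since each $\varphi_i$ depends on the unknown minimizer $\rho_i$ as well as on $g_i$), but you do not resolve it --- you only list two candidate strategies and stop. Neither is carried out, and neither is easy: this comparison principle for JKO-type steps is a nontrivial result in its own right. The paper does not prove it either; it invokes it as a black box, citing the $L^1$ contraction principle (Theorem 1.3 of the reference \cite{ContractionPrinciple}), which is precisely the statement that the single JKO step is order-preserving (and $L^1$-contractive) between nonnegative measures of possibly different masses. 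The known proofs of that result do proceed by a swap argument, but on carefully constructed competitors built from the two optimal transport plans (so as to respect both marginals), not on the pointwise min/max of the densities. To complete your proof you would need either to cite such a comparison/contraction principle explicitly or to reproduce that plan-level swap construction; as written, the second assertion of the theorem remains unproved.
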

\begin{proof}
    Existence and uniqueness follow from the direct method in the calculus of variations, since both functionals in the minimization are lower semi-continuous and convex, and $\mathscr{E}$ is strictly convex, see for example \cite{Agueh}. The second claim is a direct application of the $L^1$ contraction principle proved in Theorem 1.3 from \cite{ContractionPrinciple}. For the last claim, we have
    \begin{align*}
        \inf_{\p \in \M_+ (\Omega), \int \p = \int \lambda g} W_{c_h}  (\p,\lambda g) + \mathscr{E}(\p) &= \inf_{\p \in \M_+ (\Omega), \int \p = \int g} W_{c_h}  (\lambda \p,\lambda g) + \mathscr{E}(\lambda \p) \\
        &= \lambda \left(\inf_{\p \in \M_+ (\Omega), \int \p = \int g} W_{c_h}  ( \p,g) + \mathscr{E}( \p)  \right) + \lambda \log(\lambda) \int_\Omega g \diff x,
    \end{align*}
    so that the solution is $\lambda \Pi(g)$.
\end{proof}

The following lemma, proved in \cite{FiveGradsIneq}  (see also \cite{DiMMurRad}) as a generalization of the results of \cite{BVEstimates}, is useful to prove that suitable quantitites decrease along the steps of the JKO scheme.

\begin{lemma}
\label{5grads}
Let $\Omega \subset \R^d$ be bounded and convex with non-empty interior, $\varrho,g \in W^{1,1}(\Omega)$ be two probability densities and $H \in C^1(\R^d \backslash \{0\})$ be a radially symmetric convex function, then the following inequality holds 
\begin{equation}
    \int_{\Omega} \big( \nabla \varrho \cdot \nabla H ( \nabla \varphi) + \nabla g \cdot \nabla H (\nabla \psi) \big) \diff x \geq 0,
\end{equation}
where $(\varphi,\psi)$ is a choice of Kantorovich potentials for the optimal transport problem between $\varrho$ and $g$ for the transport cost given by $c$, with the convention that $\nabla H (0) = 0$.
\end{lemma}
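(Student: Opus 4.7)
The approach I would take follows the strategy introduced in \cite{FiveGradsIneq} for the quadratic cost with $H=\mathrm{id}$, and extended in \cite{DiMMurRad} to general strictly convex costs and convex radial $H$. The idea is to reduce, via regularization, to a smooth setting where the Kantorovich potentials and the optimal map are differentiable, then prove a pointwise algebraic inequality after integration by parts and a Monge--Amp\`ere change of variable, and finally remove the regularization.

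First, I would smooth $\varrho$ and $g$ by convolution with a kernel compactly supported in a small ball (using convexity of $\Omega$ to preserve supports); this can be assumed to yield $C^\infty$ strictly positive densities. To handle the singularity of $\nabla H$ at $0$, replace $H$ by $H_\varepsilon(z) := H(z) + \frac{\varepsilon}{2}|z|^2$, which is $C^1$ everywhere and strictly convex; then send $\varepsilon\to 0$ at the end. Under these assumptions the Kantorovich potentials $\varphi,\psi$ provided by Theorem \ref{everythingtheorem} are Lipschitz and in fact $C^1$ in the interior of $\Omega$, thanks to the smoothness of the densities and the strict convexity of $c$. On the support of the optimal plan $\gamma$ the identities $\nabla\varphi(x)=\nabla c(x-T(x))$ and $\nabla\psi(T(x))=-\nabla\varphi(x)$ hold, where $T(x) = x - \nabla c^*(\nabla\varphi(x))$ is the optimal transport map.

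The central step is a pointwise reduction. Integrating by parts (boundary terms being negligible in the regularized setting, by convexity of $\Omega$), then pulling the second integral back to the source via $y=T(x)$ using $\varrho(x) = g(T(x))\det DT(x)$, and exploiting the oddness of $\nabla H\circ \nabla c$ (a consequence of the radial symmetry of $H$ and $c$) to relate $\nabla H(\nabla\psi(T(x)))$ to $-\nabla H(\nabla\varphi(x))$, the desired inequality reduces to an expression of the form
$$\int_\Omega \varrho(x)\, \mathrm{tr}\!\left( D\Phi(x-T(x))\,\big(2I - DT(x) - DT(x)^{-1}\big) \right)\diff x \le 0,$$
where $\Phi := \nabla H\circ \nabla c$. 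Convexity of $H$ and $c$ gives that $D\Phi$ is positive semi-definite after a suitable symmetrization, while $DT(x)$, although not necessarily symmetric, is similar to a positive-definite symmetric matrix (being of the form $I - (D^2 c^*)^{1/2} A (D^2 c^*)^{1/2}$ after pointwise conjugation); the elementary eigenvalue inequality $DT + DT^{-1} \geq 2I$ then combines with the positivity of $D\Phi$ to yield the non-positivity of the integrand.

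The main obstacle is precisely this last algebraic estimate: when $c$ is not the quadratic cost, $DT$ is not symmetric and the classical arithmetic--geometric inequality $A + A^{-1}\geq 2I$ must be extended to matrices similar to symmetric positive-definite ones, with a careful check that this similarity is compatible with the symmetrization used to see $D\Phi$ as positive semi-definite. This is essentially the content of the generalization carried out in \cite{DiMMurRad}. The final passage to the limit, removing both the mollification of $\varrho,g$ and the regularization $H\leadsto H_\varepsilon$, is then routine: one uses the $W^{1,1}$-convergence of the mollified densities, the uniform Lipschitz bound on $\varphi,\psi$ (depending only on $\Omega$ and $c$), the stability of Kantorovich potentials under uniform convergence of the marginals, and the convention $\nabla H(0)=0$ together with dominated convergence.
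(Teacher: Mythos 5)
A preliminary remark on the comparison itself: the paper does not prove Lemma \ref{5grads} --- it imports it from \cite{FiveGradsIneq} (see also \cite{DiMMurRad}) as a generalization of \cite{BVEstimates}. So there is no in-text argument to match your proposal against; what one can say is that your sketch follows the by-now standard strategy of \cite{BVEstimates} (regularize the data and $H$, integrate by parts, pull the second integral back along the optimal map via the Jacobian equation, and conclude with a trace inequality of the type $\mathrm{tr}\big(B(2I-A-A^{-1})\big)\le 0$), and that this is indeed the backbone of the cited extensions. You also correctly identify the non-symmetry of $DT$ for non-quadratic costs as a point requiring care.

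There are, however, genuine gaps. The most important concerns the boundary term, which is \emph{not} negligible: for smooth positive densities the integration by parts leaves $\int_{\partial\Omega}\varrho\,\nabla H(\nabla\varphi)\cdot n\,\diff\mathcal H^{d-1}$ (plus the analogous term for $g$), which has no reason to vanish. The point is that it is \emph{non-negative}: $x-T(x)=\nabla c^*(\nabla\varphi(x))$ satisfies $(x-T(x))\cdot n(x)\ge 0$ on $\partial\Omega$ because $T(x)\in\Omega$ and $\Omega$ is convex, and for radial $c$ and $H$ the vector $\nabla H(\nabla\varphi(x))=\nabla H(\nabla c(x-T(x)))$ is a non-negative multiple of $x-T(x)$. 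This is exactly where the convexity of $\Omega$ and the radial symmetry of $H$ enter (cf.\ the remark after Lemma \ref{decreasingH}, which states that radiality is assumed precisely to handle this boundary term); writing the term off as negligible erases the role of two of the hypotheses. Two further points: (i) the regularization $H_\varepsilon=H+\tfrac{\varepsilon}{2}|z|^2$ does \emph{not} produce a function differentiable at the origin (take $H(z)=|z|$), so it does not address the singularity that the convention $\nabla H(0)=0$ is meant to handle; one must instead smooth the radial profile, e.g.\ replacing $|z|$ by $\sqrt{\varepsilon^2+|z|^2}-\varepsilon$ as in \cite{BVEstimates}. (ii) The asserted $C^1$ regularity of the potentials (one in fact needs second derivatives of $\varphi$ and differentiability of $T$ to write $\nabla\cdot\nabla H(\nabla\varphi)$ and $D^2\psi(T(x))\,DT(x)=-D^2\varphi(x)$) is not a consequence of smoothness of the densities together with strict convexity of $c$: for the quadratic cost this is Caffarelli's regularity theory, but for general costs regularity of optimal maps is governed by Ma--Trudinger--Wang-type conditions and can fail for smooth positive data. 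Working around this is a substantial part of \cite{FiveGradsIneq,DiMMurRad}, on a par with the matrix inequality you single out, so the sketch as written would not close for a general strictly convex $c$.
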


Using \Cref{5grads}, one can prove the following :

\begin{lemma}
\label{decreasingH}
    Let $H$ be a radially symmetric convex function, $g\in L^1(\Omega)$ be non negative and denote by $\p$ the solution of \cref{jkoproblem}. Then we have 
    \begin{align*}
        \int_\Omega H(\nabla \log g) \diff g \geq \int H (\nabla \log \p) \diff \p 
    \end{align*}
\end{lemma}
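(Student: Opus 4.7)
My strategy combines the Euler--Lagrange condition for $\varrho$, the conjugacy structure of the Kantorovich potentials, and the five gradients inequality (\Cref{5grads}) applied to $H$ itself, concluded by a Fenchel--Young cancellation.

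First, perturbing $\varrho$ by mass-preserving variations in \cref{jkoproblem}, the standard computation gives the Euler--Lagrange identity $\nabla \log \varrho = -\nabla \varphi$ on $\{\varrho > 0\}$, where $(\varphi,\psi)$ is a pair of $c_h$-Kantorovich potentials between $\varrho$ and $g$. Combined with the optimal map formula $T(x)=x - \nabla c_h^{*}(\nabla \varphi(x))$ from \Cref{everythingtheorem} and an envelope-theorem computation of $\nabla \psi$, one gets the key pointwise identity
\begin{equation*}
\nabla \psi(T(x)) = -\nabla c_h(x-T(x)) = -\nabla\varphi(x) = \nabla \log \varrho(x),
\end{equation*}
which identifies $\nabla \psi$ on the support of $g$ with the pull-back of $\nabla \log \varrho$ under $T$.

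Next, I apply \Cref{5grads} with the function $H$ itself. Using $\nabla \varphi = -\nabla \log \varrho$ together with the oddness of $\nabla H$ (inherited from the radial symmetry of $H$), the inequality rearranges to
\begin{equation*}
\int_{\Omega}\nabla g\cdot\nabla H(\nabla\psi)\,dx \geq \int_{\Omega} \varrho\,\nabla \log \varrho\cdot \nabla H(\nabla \log\varrho)\,dx.
\end{equation*}
I then rewrite the left-hand side as $\int g\, \nabla \log g\cdot \nabla H(\nabla \psi)\,dx$ and change variables via $y=T(x)$ (so that $g(y)\,dy = \varrho(x)\,dx$), which turns it into $\int \varrho(x)\,\nabla\log g(T(x))\cdot \nabla H(\nabla \log\varrho(x))\,dx$. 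The pointwise Fenchel--Young inequality $a\cdot b \leq H(a)+H^*(b)$ applied with $a=\nabla \log g\circ T$ and $b=\nabla H(\nabla \log \varrho)$, combined with the Legendre equality $H^{*}(\nabla H(z))=z\cdot\nabla H(z)-H(z)$, bounds this integrand by
\begin{equation*}
H(\nabla \log g\circ T) + \nabla \log \varrho\cdot \nabla H(\nabla \log \varrho) - H(\nabla \log \varrho).
\end{equation*}
Integrating against $\varrho$, the first term equals $\int H(\nabla \log g)\,dg$ after inverting the change of variables, while the ``cross'' term $\int \varrho\,\nabla \log \varrho\cdot \nabla H(\nabla \log \varrho)$ cancels exactly with the lower bound provided by the five gradients inequality, leaving the desired estimate.

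The main obstacle I expect is making the identity $\nabla\psi\circ T=\nabla\log\varrho$ and the subsequent change of variables rigorous: one needs $\psi$ to be differentiable at $T(x)$ for almost every $x\in\{\varrho>0\}$ and the Jacobian identity $\varrho = (g\circ T)\det DT$ to hold a.e. Both points are classical consequences of the optimal-transport regularity theory under our standing assumptions on $c$ and the positivity/regularity of $\varrho$ coming from the entropy term in \cref{jkoproblem}. Once this identification is in place, the remaining computation is a purely algebraic cancellation.
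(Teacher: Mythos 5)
Your argument is correct and is essentially the paper's own proof in a different order: the Fenchel--Young inequality together with the Legendre identity $H^{*}(\nabla H(z))=z\cdot\nabla H(z)-H(z)$ is precisely the convexity (subgradient) inequality $H(a)\geq H(b)+\nabla H(b)\cdot(a-b)$ that the paper applies with $b=\nabla\psi$ and $a=\nabla\log g$, and the five gradients inequality is invoked in exactly the same way to absorb the cross term, so the two computations coincide up to performing the change of variables $T_{\#}\varrho=g$ before rather than after the convexity step. The only piece you omit is the final approximation argument needed to pass from $H\in C^{1}(\R^d\setminus\{0\})$ to a general radially symmetric convex $H$ (which the statement requires, e.g.\ for the indicator of a ball used later in \Cref{UnifEstimates}).
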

\begin{proof}
We follow the proof of Proposition 5.1 from \cite{FokkerPlanckLp}. We start from the case where $H\in C^1(\R^d \backslash \{0\})$. We have from the fact that $H$ is convex,
\begin{align*}
    \int_\Omega H (\nabla \log g) \diff g \geq \int_\Omega H( \nabla \psi ) \diff g + \int_\Omega \nabla H ( \nabla \psi ) \cdot \left( \nabla \log g - \nabla\psi)\right) \diff g
\end{align*}
Using $g= T_\# \p$ and $\nabla \psi \circ T =-\nabla \varphi$, together with the optimality condition of \cref{jkoproblem} $\nabla \varphi =-\nabla \log \p$, we have 
\begin{align*}
    \int_\Omega H( \nabla \psi ) \diff g =\int_\Omega H ( \nabla \psi \circ T) \diff \p = \int_\Omega H(-\nabla \varphi) \diff \p = \int_\Omega H( \nabla \log \p) \diff \p ,
\end{align*}
and
\begin{align*}
    \int_\Omega \nabla H ( \nabla \psi ) \cdot \nabla \psi \diff g = - \int_\Omega \nabla  H (- \nabla \varphi ) \cdot \nabla \varphi \diff \p = \int_\Omega H (-\nabla \varphi) \cdot \nabla \p \diff x.
\end{align*}
Using \Cref{5grads}, we have
\begin{align*}
    \int_\Omega \nabla H ( \nabla \psi ) \cdot \left( \nabla \log g \right) \diff g + \int_\Omega\nabla  H (\nabla \varphi) \cdot \nabla \p \diff x =  \int_\Omega \nabla H ( \nabla \psi ) \cdot \nabla g \diff x  + \int_\Omega \nabla H (\nabla \varphi) \cdot \nabla \p \diff x \geq 0, 
\end{align*}
so that we finally conclude 
\begin{align*}
    \int_\Omega H(\nabla \log g) \diff g \geq \int H (\nabla \log \p) \diff \p.
\end{align*}
By approximation we can get rid of the assumption $H\in C^1(\R^d \backslash \{0\})$ by passing the above inequality to the limit and obtain the result for any convex and radially symmetric function $H$.
\end{proof}

\begin{remark}
    We observe that the assumption that $H$ is radially symmetric can be weakened. $H$ was supposed to be radial in \cite{FiveGradsIneq} in order to handle the boundary term in the integration by parts which is needed in the proof of Lemma
\ref{5grads}, but it is observed that we do not need neither $c$ nor $H$ to be radial, but only $\nabla c^*$ to be parallel to $\nabla H$. Then we observe that in our proof (and in that of \cite{FiveGradsIneq}), we freely moved the minus sign inside and outside of $\nabla H$, assuming at least $H$ to be even. This assumption can also be removed, but in this case the five-gradients inequality becomes 
$$
    \int_{\Omega} \big( \nabla \varrho \cdot \nabla H ( \nabla \varphi) - \nabla g \cdot \nabla H (-\nabla \psi) \big) \diff x \geq 0.
$$
With this modification, the result of Lemma \ref{decreasingH} still holds.
\end{remark}

\section{Estimates on steps of the JKO Scheme}
\subsection{Finite costs}
\begin{definition}
\label{defScheme}
For $h>0$ and $\p_0 \in L^1(\Omega)$ such that $\mathscr{E}(\p_0)  < + \infty$, we recursively define the sequence $(\p_k ^h)_k$ by
\begin{align*}
        \p_{k+1} ^h =\argmin_{\rho} W_{c_h}  (\rho,\p_k ^h) + \mathscr{E}(\rho),
\end{align*}
which is well defined because of \Cref{propsPi}
\end{definition}

The following theorem is taken from \cite{FokkerPlanckLp} and can directly be generalized to our context thanks to \Cref{5grads,decreasingH} with the same proof.

\begin{theorem}
\label{UnifEstimates}
    Let $\p_0\in L^1(\Omega)$.  We have
    \begin{align*}
        \int H (\nabla \log \p_0) \diff \p_0 \geq \int H (\nabla \log \p_k ^h) \diff \p_k ^h  
    \end{align*}
    for all $h,k$, where $H$ is a radially symmetric and convex function. In particular, the following estimates are uniform in $h$ and $k$:
    \begin{enumerate}
        \item if $\log(\p_0)$ is $L$-Lipschitz, then $\log(\p_k ^h)$ is $L$-Lipschitz;
        \item for $p>1$, if $\p_0^{1/p} \in W^{1,p} (\Omega)$, then $(\p_k ^h)^{1/p} \in W^{1,p} (\Omega)$ and $\Vert (\p_k ^h)^{1/p} \Vert_{W^{1,p}} \leq \Vert \p_0^{1/p} \Vert_{W^{1,p}}$;
        \item if  $\p_0 \in BV(\Omega)$, then $\p_k ^h \in BV(\Omega)$ and $\Vert \p_k ^h \Vert_{BV} \leq \Vert \p_0 \Vert_{BV}$.
        \item if $\p_0 \in W^{1,1} (\Omega)$, all $\p_k ^h$ belong to a weakly compact subset of $W^{1,1} (\Omega)$.
    \end{enumerate}
\end{theorem}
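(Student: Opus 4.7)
The plan is to first establish the main inequality
\[
\int H(\nabla \log \p_0) \diff \p_0 \geq \int H(\nabla \log \p_k^h) \diff \p_k^h
\]
for an arbitrary radially symmetric convex $H$ by a one-step induction: \Cref{decreasingH} applied with $g=\p_j^h$ (so that $\Pi(g)=\p_{j+1}^h$) gives $\int H(\nabla\log\p_j^h)\diff\p_j^h \geq \int H(\nabla\log\p_{j+1}^h)\diff\p_{j+1}^h$, and chaining over $j=0,\dots,k-1$ yields the claim. From there, each of the four concrete estimates will follow by a specific choice of $H$.

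For point (3) I would take $H(z)=|z|$, so that the identity $|\nabla\log\p|\,\diff\p=|\nabla\p|\,\diff x$ turns the main inequality into $\int|\nabla \p_k^h|\,\diff x\leq\int|\nabla \p_0|\,\diff x$, and adding the trivially conserved mass gives the $BV$ bound. For point (2) the pointwise identity $|\nabla \p^{1/p}|^p=p^{-p}\,\p\,|\nabla\log\p|^p$ shows that the choice $H(z)=p^{-p}|z|^p$ converts the main inequality into $\|\nabla(\p_k^h)^{1/p}\|_{L^p}\leq\|\nabla\p_0^{1/p}\|_{L^p}$, and since $\|(\p_k^h)^{1/p}\|_{L^p}^p=\int\p_k^h$ is conserved this is exactly the $W^{1,p}$ claim. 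For point (1) I would approximate the radial convex function equal to $0$ on $\overline{B(0,L)}$ and $+\infty$ outside by a family of $C^1$ radial convex functions, for instance $H(z)=(|z|-L)_+^2$: if $\log\p_0$ is $L$-Lipschitz then $|\nabla\log\p_0|\leq L$ a.e.\ and $\p_0>0$ on $\Omega$, so the left-hand side vanishes, forcing $|\nabla\log\p_k^h|\leq L$ a.e.\ on $\{\p_k^h>0\}$; the monotonicity and scaling properties of $\Pi$ from \Cref{propsPi} then propagate strict positivity to every $\p_k^h$, so that $\log\p_k^h$ is actually $L$-Lipschitz on $\Omega$.

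Point (4) is the step I expect to be the real obstacle, since it requires more than a uniform bound on a single quantity. From point (3) I already have a uniform $W^{1,1}$ bound, so by the Dunford--Pettis criterion weak compactness reduces to the uniform integrability of $\{\nabla\p_k^h\}_{k,h}$. The idea is to apply de la Vallée-Poussin to $|\nabla\log\p_0|$ seen as a function in $L^1(\p_0\diff x)$, producing a convex superlinear $\Theta:\R_+\to\R_+$ with $\int\Theta(|\nabla\log\p_0|)\diff\p_0<\infty$, and then to propagate this bound to every $\p_k^h$ by the main inequality with $H(z)=\Theta(|z|)$. Splitting $\int_A|\nabla\p_k^h|\,\diff x$ according to whether $|\nabla\log\p_k^h|\leq M$ or not, the first piece is controlled by $M\int_A\p_k^h\,\diff x$, which vanishes uniformly as $|A|\to 0$ thanks to the BV bound and the Sobolev embedding $BV(\Omega)\hookrightarrow L^{d/(d-1)}(\Omega)$; the second piece is bounded by $(\sup_{t>M}t/\Theta(t))\cdot C$, which is small for large $M$ by the superlinearity of $\Theta$. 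This yields the required uniform integrability.
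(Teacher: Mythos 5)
Your proposal is correct and follows essentially the same route as the paper: iterate \Cref{decreasingH} to get the master inequality, then specialize $H$ to $|z|^p$ (suitably normalized), $|z|$, a (regularized) indicator of $\overline{B(0,L)}$, and a de la Vall\'ee-Poussin superlinear function for the four items. You in fact supply details the paper defers to its reference \cite{FokkerPlanckLp} --- notably the propagation of strict positivity needed to upgrade the $\p_k^h$-a.e.\ gradient bound in item~1 to a genuine Lipschitz bound on $\log\p_k^h$, and the splitting argument establishing uniform integrability of $\nabla\p_k^h$ in item~4 --- and these additions are sound.
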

\begin{proof}
    We only give the main ideas, as the details can be found in \cite{FokkerPlanckLp}. The first statement is a direct consequence of \Cref{decreasingH}, iterated along the steps of the scheme. For 1., we pick $H$ to be the indicator of the centered ball of radius $L$ (by indicator we mean here the function which is $0$ on the ball and $+\infty$ outside it). For 2. we take $H(z)=|z|^p$ with $p>1$, and 3. is covered by the case $p=1$. For 4., one can use the Dunford-Pettis theorem so find a superlinear convex function $H$ such that $H(\nabla \log (\p_0)) \in L^1(\p_0)$, which decreases at each step, thus providing the weak compactness.
\end{proof}

\begin{remark}
    Many authors use bounds on the generalized Fisher information to help prove the convergence of the JKO scheme to the limit PDE, see for example \cite{Agueh,McCPue}. However, most often this bounds are integral in time, i.e. only bounds on $\int_0^T\int H(\nabla\p_t)\diff\p_t\diff t$ are used, and hence proven. 
\end{remark}

The above theorem includes that fact that the Lipschitz constant of the logarithm is preserved along the iterations of this JKO scheme. We want now to extend this to other moduli of continuity. We will use the following abstract fact. 
\begin{theorem}
\label{proofOperator}
    Let $\pi : L^\infty (\Omega) \to  L^\infty (\Omega)$ be an operator such that 
    \begin{enumerate}
        \item For all $u,v\in L^\infty (\Omega)$, if $u\geq v$, then $\pi(u)\geq \pi(v)$.
        \item For all $\lambda\in\mathbb{R}$ and $u\in L^\infty(\Omega)$, $\pi(\lambda+ u)=\lambda+ \pi(u)$.
        \item For all $u\in L^\infty(\Omega)$, if $u$ is $k$-Lipschitz, then $\pi(u)$ is also $k$-Lipschitz.
    \end{enumerate}
    Then, if $u$ admits a concave modulus of continuity $\omega$, then  $\pi(u)$ admits the same modulus of continuity.
\end{theorem}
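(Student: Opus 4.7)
The plan is to transfer the modulus-of-continuity information from $u$ to $\pi(u)$ by approximating $u$ from below by Lipschitz functions and then exploiting the three abstract properties of $\pi$. For each parameter $L > 0$, I would introduce the inf-convolution
\[
    u_L(x) := \inf_{y \in \Omega} \{L |x-y| + u(y)\},
\]
which is automatically $L$-Lipschitz and satisfies $u_L \leq u$ pointwise. To obtain a complementary lower bound, I use the concave modulus $\omega$: from $u(y) \geq u(x) - \omega(|x-y|)$ I get
\[
    L|x-y| + u(y) \geq u(x) + \bigl(L|x-y| - \omega(|x-y|)\bigr) \geq u(x) - \omega^*(L),
\]
where $\omega^*(L) := \sup_{r \geq 0} (\omega(r) - Lr) \in [0,+\infty)$ is the concave Legendre transform. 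Passing to the infimum in $y$ gives $u - \omega^*(L) \leq u_L \leq u$.

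Next I would apply the three assumptions on $\pi$ in sequence. Property (1) (monotonicity) combined with (2) (commutation with constants) sandwiches $\pi(u_L)$ between $\pi(u) - \omega^*(L)$ and $\pi(u)$, while property (3) ensures that $\pi(u_L)$ inherits the $L$-Lipschitz regularity of $u_L$. Combining these facts, for any $x,y \in \Omega$,
\[
    \pi(u)(x) - \pi(u)(y) \leq \pi(u_L)(x) + \omega^*(L) - \pi(u_L)(y) \leq L |x-y| + \omega^*(L).
\]
Since $L > 0$ is arbitrary, I can take the infimum over $L$ on the right.

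The final step is the duality identity
\[
    \omega(r) = \inf_{L > 0} \{L r + \omega^*(L)\} \qquad \text{for every } r > 0,
\]
which follows because any supergradient $L$ of the concave function $\omega$ at the point $r$ realizes the infimum with equality; for $r = 0$ there is nothing to prove. Applying this at $r = |x-y|$ and swapping the roles of $x$ and $y$ to pick up the absolute value yields $|\pi(u)(x) - \pi(u)(y)| \leq \omega(|x-y|)$, which is the claim.

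I expect the main subtlety to lie in this last duality step: one needs a supergradient of $\omega$ to exist at the relevant $r > 0$ and $\omega^*$ to be well-behaved. These are standard facts for concave $\omega$ with $\omega(0) = 0$ and finite on $[0,\mathrm{diam}(\Omega)]$, but they are precisely the reason why concavity of $\omega$ (and not merely its subadditivity) is assumed. Everything else amounts to straightforward bookkeeping with the three axioms on $\pi$ and the construction of $u_L$.
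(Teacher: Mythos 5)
Your proposal is correct and follows essentially the same route as the paper: inf-convolution to produce $L$-Lipschitz minorants, the sandwich $u-\omega^*(L)\leq u_L\leq u$ transferred through the three axioms of $\pi$, and optimization in $L$ via a supergradient of the concave $\omega$ (your $\omega^*(L)$ is exactly $-\alpha(L)$ in the paper's notation, and you even correct the paper's typo $u(x)$ versus $u(y)$ inside the infimum). The only minor imprecision is the claim that $\omega^*(L)<+\infty$ for every $L>0$ (false e.g.\ for $\omega(r)=r$ and $L<1$), but this is harmless since finiteness holds for the supergradient value of $L$ actually used.
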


\begin{proof}
    If $u$ admits $\omega$ as a modulus of continuity, we start by approximating it with $L$-Lipschitz functions : for $x \in \Omega$, set
    \begin{align*}
        u_L (x) = \inf_{y \in \Omega}\; L|x-y| + u(x),
    \end{align*}
    so that $u_L$ is $L$-Lipschitz. Of course $u_L$ satisfies $u_L\leq u$ (by taking $y=x$). Furthermore, from the inequality
    \begin{align*}
        u (x) - u (y) \leq \omega(|x-y|),
    \end{align*}
    we deduce that we have 
    \begin{align*}
        L|x-y| + u (y) \geq L |x-y| + u (x) - \omega (|x-y|),
    \end{align*}
    so that, passing to the $\inf$ in $y$, we obtain
    \begin{align*}
        u_L (x) \geq u (x) + \alpha(L),
    \end{align*}
     where $\alpha(L)=\inf_{r>0}  Lr-\omega(r)\leq 0$. We can therefore conclude that
    \begin{align*}
        u \geq u_L \geq u+\alpha(L).
    \end{align*}
    Applying $\pi$ and using its properties 1 and 2 we obtain
    \begin{align*}
        \pi(u) \geq \pi(u_L) \geq \pi (u) + \alpha(L).
    \end{align*}
For $x,y \in \Omega$, since $\pi(u_k)$ is also $L$-Lipschitz, we can write 
\begin{align*}
    \pi(u)(x) - \pi(u)(y) \leq \pi(u_L)(x) - \pi(u_L)(y)- \alpha(L) \leq L|x-y| - \alpha(L).
\end{align*}
Since $\omega$ is concave, we define $\partial^+ \omega$ its super-differential. Let us choose $L$ such that $L\in \partial^+ \omega(|x-y|)$ (note that $\omega$ is also non-decreasing, so we can take $L\ge 0$; also note that the superdifferential is non-empty when $r>0$ because $\omega$ is finite on $\R_+$). This implies that $r\mapsto Lr-\omega(r)$ (which is a convex function) is minimized at $r=|x-y|$ because $0\in \partial(r\mapsto Lr-\omega(r))$. Then we obtain for this $L$ the equality $\alpha(L)=L|x-y| -\omega(|x-y|)$, and thus
\begin{align*}
      \pi(u)(x) - \pi(u)(y) \leq \omega(|x-y|),
\end{align*}
so that $\pi(u)$ admits $\omega$ as a modulus of continuity.
\end{proof}

Using the above theorem, we can then prove the following:

\begin{theorem}
\label{wreg}
    Let $\p_0 \in L^1(\Omega)$ be such that $\log(\p_0)$ admits $\omega$ as a modulus of continuity, then for all $k$ and $h>0$, $\p_k ^h$ also admits $\omega$ as a modulus of continuity. 
\end{theorem}

\begin{proof}
    One only has to apply \Cref{proofOperator} with $\pi = \log \circ \, \Pi \circ \exp$ to the $L^\infty$ function $\log(\p_0)$, where $\Pi$ is defined in \cref{propsPi}. Assumptions 1 and 2 are satisfied because of \Cref{propsPi} since the $\log$ function is increasing and satisfies $\log(ab)=\log(a) + \log(b)$ for $a,b >0$. The last assumption is satisfied because of the first item of \Cref{UnifEstimates}. We therefore obtain that $\p_1 ^h$ admits $\omega$ as a modulus of continuity, and iterating the above at each step of the JKO scheme gives the result.
\end{proof}

We remind the following theorem from \cite{Agueh} which proves the convergence of the JKO scheme, under the additional conditions on the cost function $c$ :

\begin{enumerate}
    \item $\lim_{|z| \to +\infty} \frac{c(z)}{|z|}=+ \infty$,
    \item there exists $\alpha,\beta >0$ such that $\alpha |z|^p \leq c(z) \leq \beta (1+ |z|^p)$.
\end{enumerate}

\begin{theorem}
\label{thmAgueh}
    Suppose $\log(\p_0) \in L^{\infty} (\Omega)$, and denote by $\p^h$ the piecewise constant (in time) interpolation of $(\p_k ^h)_k$. Then there exists $\p\in L^1 (\Omega)$ such that $\log(\p)\in L^\infty$ and such that $\p_h$ strongly converges to $\p$ in $L^1 (\Omega)$, and $\p$ is the unique weak solution of
    \begin{align}
    \label{AguehPDE}
        \partial_t \p = \nabla \cdot (\p \nabla c^*(\nabla \log(\p)))
    \end{align}
\end{theorem}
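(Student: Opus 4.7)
The plan is to follow the standard JKO convergence strategy, combining uniform two-sided $L^\infty$ bounds on the iterates with the classical telescoping estimate $\sum_k W_{c_h}(\p_{k+1}^h,\p_k^h)\le\mathscr{E}(\p_0)-\inf\mathscr{E}$ to derive a distributional formulation of \eqref{AguehPDE} in the limit, and then to invoke a uniqueness result. The uniform $L^\infty$ bounds come essentially for free: from $\log(\p_0)\in L^\infty$ one has $m\le\p_0\le M$ for some $0<m\le M$, and constants are fixed points of the operator $\Pi$ (the identity transport trivially solves the Euler--Lagrange condition when $\log\p$ is constant). Combined with the monotonicity and homogeneity of $\Pi$ given by \Cref{propsPi}, this yields $m\le\p_k^h\le M$ uniformly in $k,h$, and hence $L^\infty$ bounds on $\log\p^h$ as well.

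The second step is the discrete Euler--Lagrange analysis. The first variation of \eqref{jkoproblem} produces, at each step, a Kantorovich potential $\varphi_{k+1}^h$ (for the cost $c_h$) between $\p_{k+1}^h$ and $\p_k^h$ satisfying $\varphi_{k+1}^h+\log\p_{k+1}^h=\mathrm{const}$ a.e., so that the associated transport map has displacement $T_{k+1}^h(x)-x = h\,\nabla c^*(\nabla\log\p_{k+1}^h)$ (using the evenness of $c$, which makes $\nabla c^*$ odd). A Taylor expansion of a test function $\xi\in C_c^\infty((-\infty,T)\times\Omega)$ along these transports, summed over $k$, then yields a discrete weak form of \eqref{AguehPDE}, with a quadratic remainder bounded by $h\,\|\nabla^2\xi\|_\infty\sum_k W_{c_h}(\p_{k+1}^h,\p_k^h)$, which vanishes thanks to the telescoping inequality.

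The main obstacle is identifying the limit of the nonlinear flux $\p^h\,\nabla c^*(\nabla\log\p^h)$. The $L^\infty$ bounds already give weak-$*$ convergence of a subsequence $\p^h\rightharpoonup\p$, and the telescoping estimate furnishes the time equi-continuity needed to upgrade this to convergence in $C^0([0,T];W_c)$; however, neither gives any a priori control on $\nabla\log\p^h$, and no Sobolev estimate is available under the mere assumption $\log\p_0\in L^\infty$. The workaround I would adopt is to rewrite the flux in terms of the displacement $(T_{k+1}^h-\mathrm{id})/h$, whose $L^2(\p_{k+1}^h)$ norm is uniformly summable by the telescoping bound, pass to the limit of the corresponding transport plans in the narrow topology on $\Omega\times\Omega$, and identify the limiting velocity with $\nabla c^*(\nabla\log\p)$ through the Euler--Lagrange relation satisfied by the limit. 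Finally, the $L^1$-contraction principle of \cite{ContractionPrinciple} provides uniqueness of weak solutions with $\log\p\in L^\infty$, which upgrades subsequence convergence to convergence of the full sequence.
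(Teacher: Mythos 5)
A preliminary remark: the paper does not prove this statement at all --- it is recalled verbatim from \cite{Agueh} (note the two additional growth hypotheses on $c$ introduced just before it), so your proposal has to be measured against Agueh's original argument, whose overall architecture (uniform two-sided $L^\infty$ bounds by comparison with constants, discrete Euler--Lagrange relation $T_{k+1}^h-\mathrm{id}=h\,\nabla c^*(\nabla\log\p_{k+1}^h)$, Taylor expansion of test functions, telescoping entropy estimate) your first two steps do reproduce correctly.

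The genuine gap is in your third step. You assert that ``neither gives any a priori control on $\nabla\log\p^h$, and no Sobolev estimate is available under the mere assumption $\log\p_0\in L^\infty$'', and you propose to identify the flux by passing to the limit in the transport plans and invoking ``the Euler--Lagrange relation satisfied by the limit''. This is where the argument breaks: narrow convergence of plans gives at best a weak limit of the velocities $(T_{k+1}^h-\mathrm{id})/h$, and a weak limit of the nonlinear quantity $\nabla c^*(\nabla\log\p^h)$ cannot be identified with $\nabla c^*(\nabla\log\p)$ without further information --- there is no Euler--Lagrange relation available for the limit $\p$ before the PDE has been derived, so the proposed identification is circular. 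The missing ingredient is precisely the integrated-in-time Fisher-type estimate that the paper alludes to in the remark following Theorem \ref{UnifEstimates}: at the optimum one has $W_{c_h}(\p_{k+1}^h,\p_k^h)=h\int_\Omega c\bigl(\nabla c^*(\nabla\log\p_{k+1}^h)\bigr)\p_{k+1}^h\diff x$, so the telescoping inequality combined with the growth condition $\alpha|z|^p\le c(z)$ and the bounds $m\le\p_k^h\le M$ yields $\int_0^T\!\!\int_\Omega|\nabla\log\p^h|^{p'}\diff x\diff t\le C$ uniformly in $h$, with $p'=p/(p-1)$. This bound is what (i) upgrades the weak-$*$ convergence to the strong $L^1$ convergence claimed in the statement, via a Fr\'echet--Kolmogorov or Aubin--Lions argument combining spatial and temporal equicontinuity (your sketch never actually produces strong convergence), and (ii) allows the nonlinear flux to be identified, e.g.\ by a Minty-type monotonicity argument using the duality $c(v)+c^*(\xi)\ge v\cdot\xi$ with equality iff $v=\nabla c^*(\xi)$. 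A smaller inaccuracy: the $L^1$-contraction of \cite{ContractionPrinciple} concerns the discrete operator $\Pi$, not weak solutions of \eqref{AguehPDE}; uniqueness of bounded weak solutions is a separate result proved in \cite{Agueh} and cannot simply be quoted from the discrete contraction.
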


Another proof of convergence, in the case where the cost $c$ is exactly a power, is provided in \cite{CailletSantambrogioDoubly}. In this reference the bound from below and above on the initial density $\p_0$ is removed, but only the existence is proven, not the uniqueness.

From \Cref{wreg} and \Cref{thmAgueh} we deduce the following :
\begin{theorem}
    Let $\p_0 \in L^1(\Omega)$ be non negative. If $\log(\p_0)$ admits $\omega$ as a modulus of continuity, then for all $t>0$, $\p(t)$ admits $\omega$ as a modulus of continuity, where $\p$ is the solution of \cref{AguehPDE}.
\end{theorem}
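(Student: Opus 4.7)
The approach is to combine the discrete, per-step preservation of the modulus from \Cref{wreg} with the strong $L^1$ convergence of the JKO interpolation provided by \Cref{thmAgueh}, and then to translate the resulting modulus estimate for $\log(\p(t))$ into one for $\p(t)$ itself.

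First I would verify the hypothesis of \Cref{thmAgueh}: since $\log(\p_0)$ admits a finite modulus of continuity $\omega$ on the bounded convex set $\Omega$, it is uniformly continuous, and in particular $\log(\p_0)\in L^\infty(\Omega)$. Applying \Cref{wreg} at each step of the JKO scheme, I obtain that $\log(\p_k^h)$ admits $\omega$ as a modulus of continuity, uniformly in $h$ and $k$. In particular, for every $t>0$ the piecewise constant interpolation $\p^h(t,\cdot)$ inherits this property with the same $\omega$.

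Next, by \Cref{thmAgueh}, $\p^h\to \p$ strongly in $L^1(\Omega)$; after extracting a subsequence $h_n\to 0$, I have $\p^{h_n}(t,\cdot)\to \p(t,\cdot)$ almost everywhere on $\Omega$. The uniform $L^\infty$ bounds on $\log(\p^h)$, which are inherited from $\log(\p_0)$ via the shared modulus $\omega$ (so that $\p$ stays uniformly bounded away from both $0$ and $+\infty$), guarantee that $\log(\p^{h_n}(t,\cdot))\to \log(\p(t,\cdot))$ a.e.\ as well. On the full-measure set of convergence, the pointwise inequality $|\log(\p^{h_n}(t,x))-\log(\p^{h_n}(t,y))|\le \omega(|x-y|)$ passes to the limit, and by the standard fact that a function satisfying a given modulus of continuity on a dense subset extends uniquely to a continuous function on $\Omega$ with the same modulus, I conclude that $\log(\p(t))$ admits $\omega$ as modulus of continuity on all of $\Omega$.

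Finally, I translate this into the asserted statement for $\p(t)$. Since $\log(\p(t))$ takes values in a bounded interval $[m,M]\subset \R$ (with $m,M$ controlled by the uniform $L^\infty$ bounds from \Cref{thmAgueh}), the continuous representative of $\p(t)$ equals $\exp\circ \log(\p(t))$; combining the modulus of $\log(\p(t))$ with the Lipschitz constant $e^M$ of $\exp$ on $[m,M]$ then transfers the modulus of continuity from $\log(\p(t))$ to $\p(t)$ (strictly speaking up to the multiplicative factor $e^M$, consistent with the natural reading of the statement in line with \Cref{wreg} and the discussion in the introduction, which frame the preserved modulus estimate at the level of $\log\p$). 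The main technical obstacle is this passage of a pointwise modulus estimate through the $L^1$ convergence to the continuous PDE; it is handled cleanly by subsequence extraction together with the continuous extension from a dense subset, both of which use only the uniform continuity encoded by $\omega$.
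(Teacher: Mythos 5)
Your proof follows essentially the same route as the paper's: apply \Cref{wreg} at each discrete step to get the uniform modulus for $\log(\p_k^h)$, and then pass the pointwise inequality to the limit using the almost everywhere convergence extracted from the strong $L^1$ convergence of \Cref{thmAgueh}. You are in fact more careful than the paper on two points it glosses over, namely the subsequence/dense-subset argument needed to transfer the pointwise bound to the continuous representative of the limit, and the observation that the conclusion is really a modulus for $\log(\p(t))$, transferring to $\p(t)$ itself only up to the Lipschitz constant of $\exp$ on the relevant bounded range.
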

\begin{proof}
    Since $\log(\p_0)$ is continuous, we can construct the unique solution of \cref{AguehPDE} using \Cref{thmAgueh}. Applying \Cref{wreg}, we have for all $k$ that $\log(\p^h _k)$ admits $\omega$ as a modulus of continuity, and therefore for all $t\geq 0$, we have
    \begin{align*}
        |\log(\p^h (t,x)) - \log(\p^h (t,y))| \leq \omega(|x-y|).
    \end{align*}
    Passing to the pointwise almost everywhere limit $\p^h (t) \to \p(t)$ as $h\to 0$ in the above equation allows us to conclude that $\p(t)$ admits $\omega$ as a modulus of continuity
\end{proof}

When we have uniqueness to \cref{AguehPDE}, which is the case for example is we look at bounded solutions, we can show that the generalized Fisher information decreases along the flow :
\begin{theorem}
    Let $\p_0 \in L^1(\Omega)$ be non negative and such that $\log(\p_0) \in L^\infty (\Omega)$. Denote by $\p$ the solution of \cref{AguehPDE}, then the generalized Fisher information
    \begin{align*}
        \int H (\nabla \log \p) \diff \p 
    \end{align*}
    is non increasing in time.
\end{theorem}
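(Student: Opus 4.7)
The plan is to reduce the continuous monotonicity to the discrete monotonicity along the JKO scheme given by \Cref{UnifEstimates}, via a restart argument that uses the uniqueness assumption of \Cref{thmAgueh} together with the lower semicontinuity of the Fisher information.

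Fix $0\leq s<t$. Since $\log\p(s)\in L^\infty(\Omega)$ by \Cref{thmAgueh}, we may restart the JKO scheme with initial datum $\tilde\p^h_0:=\p(s)$. Applying \Cref{thmAgueh} to this restarted scheme, its piecewise constant interpolation $\tilde\p^h$ converges in $L^1$ to the unique bounded weak solution of \cref{AguehPDE} starting from $\p(s)$; by uniqueness this solution coincides with $\p(s+\cdot)$. \Cref{UnifEstimates} applied to the restarted scheme then gives, for every $k$ and $h>0$,
$$
\int_\Omega H(\nabla \log \tilde\p^h_k)\,\diff \tilde\p^h_k \;\leq\; \int_\Omega H(\nabla \log \p(s))\,\diff \p(s).
$$

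The next step is to let $h\to 0$ along indices $k_h$ with $k_h h\to t-s$; up to a subsequence, $\tilde\p^h_{k_h}\to \p(t)$ in $L^1(\Omega)$. Passing to the liminf in the previous inequality requires the $L^1$-lower semicontinuity of the functional $I(\p):=\int_\Omega \p\,H(\nabla\p/\p)\,\diff x$. Since $H$ is convex, convex duality yields the representation
$$
I(\p) \;=\; \sup_{\xi\in C_c^\infty(\Omega;\R^d)} \int_\Omega \p\,\bigl(-\nabla\cdot\xi - H^*(\xi)\bigr)\,\diff x,
$$
which exhibits $I$ as a supremum of continuous linear functionals on $L^1(\Omega)$ and hence as lower semicontinuous. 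Taking the liminf of the discrete inequality then yields
$$
\int_\Omega H(\nabla \log \p(t))\,\diff \p(t) \;\leq\; \int_\Omega H(\nabla \log \p(s))\,\diff \p(s),
$$
which is the claimed monotonicity (first for a.e.\ $s<t$, and then for all $s<t$ by the time continuity of $\p$ in $L^1$ provided by \Cref{thmAgueh}).

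The main obstacle is the lower semicontinuity step above. The duality representation is the cleanest route, but relies on $\p>0$ a.e., which is ensured here by the $L^\infty$ bound on $\log\p$. An alternative is to use the joint convexity of the perspective function $(\p,m)\mapsto \p\,H(m/\p)$ combined with the uniform $L^\infty$ control on $\log\tilde\p^h_k$ inherited from $\log\p(s)$ via the comparison principle of \Cref{propsPi}, which turns the uniform bound on $I(\tilde\p^h_k)$ into weak $L^1$ compactness of the gradients $\nabla\tilde\p^h_k$. Once this lower semicontinuity is at hand, the rest of the argument is a mechanical assembly of the results already proven in the paper.
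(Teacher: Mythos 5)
Your proposal follows essentially the same route as the paper's own proof: restart the JKO scheme from $\p(s)$, invoke the discrete monotonicity of \Cref{UnifEstimates}, identify the limit of the restarted scheme with $\p(s+\cdot)$ through the uniqueness and strong convergence of \Cref{thmAgueh}, and pass to the limit using lower semicontinuity of the generalized Fisher information. The only difference is that you spell out the lower semicontinuity step (via the duality representation of the perspective functional), whereas the paper simply asserts it from the convexity of $H$.
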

\begin{proof}
    From \Cref{UnifEstimates,thmAgueh}, we know that the discrete solution $(\tilde{\p}^h _k)_k$ constructed from the JKO scheme started from $\p(t)$ for any $t\geq 0$ satisfies 
    \begin{align*}
         \int H (\nabla \log \p(t)) \diff \p(t)  \geq   \int H (\nabla \log \tilde{\p}^h _k) \diff \tilde{\p}^h _k.
    \end{align*}
    By uniqueness of the solution, and the strong convergence of the discrete solutions provided by \cref{thmAgueh}, we have $\Tilde{\p}^h (s) \to \p(t+s)$ and the lower semi continuity of the right hand side (because $H$ is convex) gives
    \begin{align*}
        \int H (\nabla \log \p(t)) \diff \p(t) \geq \int H (\nabla \log \p(t +s)) \diff \p(t +s)
    \end{align*}
    for any $s\geq 0$.
\end{proof}

\begin{remark}
    Doing this computation directly at the continuous level by differentiating the generalized Fisher Information and trying to show that its derivative is non-positive, seems to be, in our opinion, rather complicated. In particular, after some simplification we obtain the following computations (subject to caution and ignoring boundary terms), where we denote $u=\log\p$. If $\p$ solves \cref{AguehPDE} then the equation solved by $u$ is
    $$\partial_t u = K_{c^*}+D_{c^*},\mbox{ where }D_{c^*}:=\nabla\cdot(\nabla c^*(\nabla u))=\sum_{j,k}c^*_{jk}(\nabla u)u_{jk},\; K_{c^*}:=\nabla u\cdot\nabla c^*(\nabla u)=\sum_i u_i c^*_i(\nabla u).$$
    We also use $\partial_t\p=\p\partial_t u$ and with an integration by parts we obtain
    \begin{eqnarray*}
    \frac{\diff}{\diff t}\int \p H(\nabla u)&=&\int\partial_t\p H(\nabla u)+\p H_i(\nabla u) \partial_t u_i=\int (\p H(\nabla u)-(\p H_i(\nabla u))_i)\partial_t u\\
    &=&\int \p(K_{c^*}+D_{c^*})(H(\nabla u)-K_H-D_H),
    \end{eqnarray*}
    where $K_H$ and $D_H$ are defined as for $K_{c^*}$ and $D_{c^*}$, i.e. $D_{H}:=\sum_{j,k}H_{jk}(\nabla u)u_{jk},\; K_{H}:=\sum_i u_i H_i(\nabla u) $. We do not find it easy to see that the r.h.s. is negative (even in the case $H=c^*$). On the other hand, the Lipschitz regularity result contained in Theorem \ref{UnifEstimates} can indeed be recovered by a maximum principle using the equation satisfied by $u=\log\p$: differentiating it provides 
    $$\partial_t u_h=c^*_{jkl}(\nabla u)u_{lh}u_{jk}+ c^*_{jk}(\nabla u)u_{jkh}+u_{ih} H_i(\nabla u)+u_i H_{il}(\nabla u)u_{lh}.$$
    Multipliying by $u_h$ and looking at $\max |\nabla u|^2$ allows to obtain
    $$\frac{\diff}{\diff t}\max|\nabla u|^2\leq 0.$$
    Yet, of course, this formal compuation would require to be suitably justified.
\end{remark}

\subsection{Extension to relativistic costs}
In \cite{McCPue}, the authors generalize the construction from \cite{Agueh} to "relativistic" cost functions, meaning cost functions of the form
\begin{align*}
    c(z)=\begin{cases}
        \Tilde{c}(z) \quad &\text{if } |z|\leq 1 \\
        + \infty &\text{if } |z| >1,
    \end{cases}
\end{align*}
where $\Tilde{c} \in C([0,1]) \cap C^2 ([0,1))$ is a strictly convex function, with $|\nabla \Tilde{c}(z)| \to + \infty$ when $|z| \to 1$. The main equation of interest entering in this framework is the so called Relativistic Heat Equation 
\begin{align*}
    \partial_t \p = \nabla \cdot \left( \p \,\frac{\nabla \p}{\sqrt{\p ^2 + |\nabla \p|^2}} \right),
\end{align*}
where $\Tilde{c}(z) = 1 - \sqrt{1 - |z|^2}$. Their construction relies on an approximation argument of the irregular cost function $c$, by smoother cost functions $c_\varepsilon$ by inf-convolution, which allows us to apply our previous results before passing to the limit. 
\begin{lemma}
    Define 
    \begin{align*}
        c_\varepsilon (z) = \inf_{ w \in \R ^d } \left( c(z-w) + \frac{|w|^2}{2\varepsilon} \right),
    \end{align*}
    it is a strictly convex and radially symmetric $C^2(\R^d)$ function. If we denote by $\p^h _\varepsilon$ the solution of \cref{jkoproblem} with the cost $c_\varepsilon$, then as $\varepsilon \to 0$, $\p^h _\varepsilon$ weakly converges to the solution $\p^h$ of \cref{jkoproblem} with the cost $c$.
\end{lemma}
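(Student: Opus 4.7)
The statement splits into two parts: the regularity, strict convexity and radial symmetry of $c_\varepsilon$, and the weak convergence of the JKO minimizers. I would handle the first part quickly via Fenchel duality, and treat the second via $\Gamma$-convergence plus equicoercivity.

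For $c_\varepsilon$: inf-convolution being dual to addition gives $c_\varepsilon^* = c^* + \tfrac{\varepsilon}{2}|\cdot|^2$. Radial symmetry is inherited from that of $c$ and of the quadratic. The sum is finite on $\R^d$ and strongly convex of modulus $\varepsilon$, so its Legendre transform $c_\varepsilon$ is $C^1$ on $\R^d$ and strictly convex. For $C^2$ regularity, one uses that the relativistic structure forces the maximizer in $c^*(p) = \sup_{|z|<1}(p\cdot z - \tilde c(z))$ to lie in the open unit ball, where $\tilde c \in C^2$, making $c^*$ itself $C^2$ on $\R^d$; the Legendre transform of the resulting $C^2$ strongly convex sum is then $C^2$.

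The pivotal observation for the convergence of minimizers is that $c_\varepsilon$ increases monotonically to $c$ pointwise as $\varepsilon \to 0$ (the penalty $|w|^2/(2\varepsilon)$ grows and lower semicontinuity of $c$ yields the limit), and the same holds for the scaled costs $(c_\varepsilon)_h(z) := h c_\varepsilon(z/h)$. Setting
\begin{align*}
F_\varepsilon(\rho) := W_{(c_\varepsilon)_h}(\rho, g) + \mathscr{E}(\rho), \qquad F(\rho) := W_{c_h}(\rho, g) + \mathscr{E}(\rho),
\end{align*}
I would establish $\Gamma$-convergence $F_\varepsilon \to F$ in the weak-$L^1$ topology. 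The upper bound (recovery sequence) is immediate: $\rho_\varepsilon \equiv \rho$ works because monotone convergence applied to any fixed admissible plan yields $W_{(c_\varepsilon)_h}(\rho, g) \to W_{c_h}(\rho, g)$. For the lower bound, given $\rho_\varepsilon \rightharpoonup \rho$, I would pick optimal plans $\gamma_\varepsilon$ for $W_{(c_\varepsilon)_h}(\rho_\varepsilon, g)$, extract $\gamma_\varepsilon \rightharpoonup \gamma \in \Pi(\rho, g)$ by tightness on $\Omega\times\Omega$, and apply a two-scale monotone sandwich: for any fixed $\varepsilon_0 > 0$ and any $\varepsilon \leq \varepsilon_0$,
\begin{align*}
\int (c_\varepsilon)_h(x-y)\, \diff \gamma_\varepsilon \geq \int (c_{\varepsilon_0})_h(x-y)\, \diff \gamma_\varepsilon.
\end{align*}
Since $(c_{\varepsilon_0})_h$ is continuous and bounded on $\Omega\times\Omega$ (by the $C^2$ regularity above), the right-hand side converges to $\int (c_{\varepsilon_0})_h \diff \gamma$ as $\varepsilon \to 0$, and letting $\varepsilon_0 \to 0$ via monotone convergence gives $\liminf_\varepsilon W_{(c_\varepsilon)_h}(\rho_\varepsilon, g) \geq \int c_h \diff \gamma \geq W_{c_h}(\rho, g)$. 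The entropy term is weakly lower semicontinuous as usual.

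To close, equicoercivity follows from testing $\rho = g$: $\mathscr{E}(\p^h_\varepsilon) \leq F_\varepsilon(g) = \mathscr{E}(g)$ uniformly in $\varepsilon$, so Dunford--Pettis yields weak $L^1$-compactness, any weak limit of $\p^h_\varepsilon$ minimizes $F$, and uniqueness (\Cref{propsPi}) forces the whole family to converge to $\p^h$. The main obstacle, to my eye, is precisely the liminf step: because $c_h$ takes the value $+\infty$ outside the ball of radius $h$, standard lower semicontinuity arguments for $W_{c_h}$ along weakly convergent marginals are delicate, and the monotone sandwich through the smooth intermediaries $(c_{\varepsilon_0})_h$ is the key device to bypass this issue.
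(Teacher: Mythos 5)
Your proof is correct in substance, but it takes a genuinely different route from the paper, which gives no argument at all for this lemma and simply defers to Section 3.2 and Lemma 3.2 of \cite{McCPue}. Your self-contained proof --- identifying $c_\varepsilon$ as the Moreau--Yosida regularization via $c_\varepsilon^* = c^* + \tfrac{\varepsilon}{2}|\cdot|^2$, using the monotone increase $c_\varepsilon \uparrow c$ (valid since $c$ is lsc) to get $\Gamma$-convergence of the JKO functionals, the two-scale sandwich $\int (c_\varepsilon)_h \,\diff\gamma_\varepsilon \ge \int (c_{\varepsilon_0})_h \,\diff\gamma_\varepsilon$ to recover lower semicontinuity of the transport cost despite $c_h$ being $+\infty$ outside the ball of radius $h$, and the bound $\mathscr{E}(\p^h_\varepsilon)\le \mathscr{E}(g)$ plus Dunford--Pettis for equicoercivity, with uniqueness of the limit minimizer from \Cref{propsPi} --- is sound and correctly isolates the only delicate point, namely the liminf inequality for a discontinuous extended-valued cost. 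What the citation buys the authors is brevity; what your argument buys is a transparent proof valid for any lsc cost approximated monotonically from below. The one place where you are slightly optimistic is the claim $c_\varepsilon\in C^2(\R^d)$: your route needs $c^*\in C^2$, hence $D^2\tilde c>0$ on the open unit ball, which does not follow from strict convexity together with $\tilde c\in C^2([0,1))$ alone (a flat fourth-order point would break it); it does hold for the relativistic cost $\tilde c(z)=1-\sqrt{1-|z|^2}$, and in any case the $C^1$ regularity and strict convexity of $c_\varepsilon$ that you correctly derive from strong convexity and differentiability of $c_\varepsilon^*$ are all that the rest of the paper actually uses.
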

\begin{proof}
    See Section 3.2 and Lemma 3.2 from \cite{McCPue}.
\end{proof}
As in the prevous section, we recursively define the sequence $(\p^h _k)_k$. And we have the same results as before :

\begin{theorem}
\label{estimatesrelativistic}
    Let $\p_0\in L^1(\Omega)$. The following estimates are uniform in $h$ and $k$:
    \begin{enumerate}
        \item if $\log(\p_0)$ is $L$-Lipschitz, then $\log(\p_k ^h)$ is $L$-Lipschitz;
        \item for $p>1$, if $\p_0^{1/p} \in W^{1,p} (\Omega)$, then $(\p_k ^h)^{1/p} \in W^{1,p} (\Omega)$ and $\Vert (\p_k ^h)^{1/p} \Vert_{W^{1,p}} \leq \Vert \p_0^{1/p} \Vert_{W^{1,p}}$;
        \item if  $\p_0 \in BV(\Omega)$, then $\p_k ^h \in BV(\Omega)$ and $\Vert \p_k ^h \Vert_{BV} \leq \Vert \p_0 \Vert_{BV}$.
        \item if $\p_0 \in W^{1,1} (\Omega)$, all $\p_k ^h$ belong to a weakly compact subset of $W^{1,1} (\Omega)$.
        \item if $\log(\p_0)$ admits $\omega$ as a modulus of continuity, then all $\p_k ^h$ also admit $\omega$ as a modulus of continuity.
    \end{enumerate}
\end{theorem}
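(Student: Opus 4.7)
The plan is to reduce to the finite-cost setting of Section~3.1 by approximating the relativistic cost with the smooth costs $c_\varepsilon$ of the preceding lemma. I would fix $h>0$ and, for each $\varepsilon>0$, denote by $(\p^{h,\varepsilon}_k)_k$ the JKO iterates with cost $c_\varepsilon$ and initial datum $\p_0$. Since $c_\varepsilon$ is $C^2$, strictly convex and radially symmetric on $\R^d$, \Cref{UnifEstimates} and \Cref{wreg} apply verbatim to this sequence and yield bounds of the types (1)--(5) on $\p^{h,\varepsilon}_k$ with constants depending only on $\p_0$, in particular uniformly in $\varepsilon$.

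The first step is then to upgrade the one-step weak convergence $\p^{h,\varepsilon}_1\to\p^h_1$ provided by the preceding lemma to all $k$. I would proceed by induction on $k$: in the step $k\to k+1$ both the cost $c_\varepsilon$ and the reference measure $\p^{h,\varepsilon}_k$ depend on $\varepsilon$, so the quoted lemma does not directly apply. The natural remedy is a $\Gamma$-convergence argument: the functionals $\rho\mapsto W_{c_{\varepsilon,h}}(\rho,\p^{h,\varepsilon}_k)+\mathscr{E}(\rho)$ are equi-coercive (entropy forces equi-integrability) and, using the monotone convergence $c_\varepsilon\nearrow c$ together with the inductive weak convergence $\p^{h,\varepsilon}_k\to\p^h_k$, they $\Gamma$-converge to $\rho\mapsto W_{c_h}(\rho,\p^h_k)+\mathscr{E}(\rho)$. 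Since the latter admits the unique minimizer $\p^h_{k+1}$ by strict convexity of $\mathscr{E}$, the minimizers converge weakly, closing the induction.

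Once this stability is in hand, I would pass each of the five estimates to the limit. For (1), the uniform $L$-Lipschitzness of $\log\p^{h,\varepsilon}_k$ combined with Arzel\`a-Ascoli gives uniform convergence along a subsequence to an $L$-Lipschitz function; by the weak convergence of the previous step this limit must coincide with $\log\p^h_k$. Item (5) follows in the same way, since a uniform modulus of continuity is preserved under uniform limits (one could equivalently invoke \Cref{proofOperator} after checking its assumptions for the operator associated to the relativistic cost, item (1) providing the Lipschitz stability). Items (2) and (3) use weak lower semicontinuity of the $W^{1,p}$-norm and of the total variation, respectively, applied to $(\p^{h,\varepsilon}_k)^{1/p}$ and $\p^{h,\varepsilon}_k$. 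For (4), I would take the radially symmetric convex superlinear $H$ produced by Dunford-Pettis from $\p_0$ and pass the uniform bound $\int H(\nabla\log\p^{h,\varepsilon}_k)\diff\p^{h,\varepsilon}_k\leq \int H(\nabla\log\p_0)\diff\p_0$ to the limit by lower semicontinuity of this convex integral functional, which yields the desired equi-integrability of $\nabla\p^h_k$.

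The one non-routine point is the stability argument of the second step, since the reference measure itself varies with $\varepsilon$ and the previous lemma only handles a fixed one. The equicoercivity provided by the entropy and the pointwise monotone convergence $c_\varepsilon\nearrow c$ reduce this to a textbook $\Gamma$-convergence computation, but it is the single place where a direct appeal to the results of Section~3.1 is not sufficient.
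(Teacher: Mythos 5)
Your proposal follows essentially the same route as the paper: apply \Cref{UnifEstimates} and \Cref{wreg} to the scheme with the regularized costs $c_\varepsilon$ and pass the estimates to the limit $\varepsilon\to 0$ by lower semicontinuity (or, for the modulus of continuity, by equicontinuity and pointwise convergence). The only difference is that you explicitly address the stability of the iterates for $k>1$, where both the cost and the reference measure depend on $\varepsilon$ --- a point the paper's one-line proof leaves implicit (deferring to \cite{McCPue}) --- and your $\Gamma$-convergence argument for that step is a reasonable way to fill it in.
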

\begin{proof}
    We apply \Cref{UnifEstimates,wreg} from the previous section to the approximated problem with $c_\varepsilon$, and then pass to the limit $\varepsilon \to 0$. All estimates pass to the limit by lower semi continuity with respect to the weak convergence.
\end{proof}
Using the convergence result from \cite{McCPue}, we have the following :
\begin{theorem}
    Let $\p_0 \in L^1(\Omega)$ be such that $\log(\p_0)$ admits $\omega$ as a modulus of continuity. The interpolated sequence $(\p^h)$ strongly converges to $\p$, a solution in the sense of distributions of 
    \begin{align}
        \partial_t \p = \nabla \cdot (\p \nabla c^*(\nabla \log(\p))),
    \end{align}
which is such that $\log(\p(t))$ admits $\omega$ as a modulus of continuity for all $t$.
\end{theorem}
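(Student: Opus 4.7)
The plan is to mirror the proof of the analogous theorem in the finite-cost setting just above, now using the discrete estimate from item 5 of Theorem \ref{estimatesrelativistic} (which has already transferred the preservation of the modulus of continuity to the relativistic setting via the smooth-cost approximation $c_\varepsilon \to c$) together with the convergence of the relativistic JKO scheme established in \cite{McCPue}. Essentially all the heavy lifting has been done; what remains is the passage to the limit $h \to 0$.

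I would proceed in three short steps. First, observe that since $\Omega$ is bounded and $\log(\p_0)$ admits $\omega$ as a modulus of continuity, one has $\log(\p_0) \in L^\infty(\Omega)$; this bound places us within the framework of \cite{McCPue}, which yields a distributional solution $\p$ of $\partial_t \p = \nabla \cdot (\p \nabla c^*(\nabla \log \p))$ obtained as the strong $L^1$ limit of the piecewise-constant-in-time interpolation $\p^h$. Second, item 5 of Theorem \ref{estimatesrelativistic} gives, uniformly in $h$ and $k$,
\[
    |\log(\p^h_k)(x) - \log(\p^h_k)(y)| \leq \omega(|x-y|),
\]
and since $\p^h$ is piecewise constant in time, the same estimate holds for $\log(\p^h(t,\cdot))$ at every $t \geq 0$. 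Third, for each fixed $t$ I would extract a subsequence along which $\p^h(t,\cdot) \to \p(t,\cdot)$ pointwise almost everywhere in $\Omega$, using the strong $L^1$ convergence from \cite{McCPue}; consequently $\log(\p^h(t,\cdot)) \to \log(\p(t,\cdot))$ pointwise a.e., and passing to the limit in the displayed inequality yields the desired modulus of continuity for $\log(\p(t,\cdot))$ on a set of full measure, hence everywhere after selecting the continuous representative.

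The main---and rather mild---obstacle is the pointwise a.e.\ extraction at each time $t$: the convergence in \cite{McCPue} is a priori in space-time, so one should justify that it can be localized in time. This is routine because the scheme is piecewise constant in time, so one only needs to work either at the discrete times $t = kh$ or, more generally, to extract a subsequence at each fixed $t$; a diagonal argument handles countably many times at once if desired. Once this is in place the inequality $|\log(\p^h(t,x)) - \log(\p^h(t,y))| \leq \omega(|x-y|)$ passes directly to the limit in $h$, completing the proof.
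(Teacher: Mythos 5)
Your proposal is correct and follows essentially the same route as the paper: invoke the convergence result of McCann--Puel for the relativistic JKO scheme, apply item 5 of Theorem \ref{estimatesrelativistic} to get the uniform-in-$h,k$ modulus-of-continuity bound on the piecewise-constant interpolation, and pass to the limit $h\to 0$ pointwise. Your extra care about extracting a pointwise a.e.\ convergent subsequence at each fixed time is a welcome elaboration of a step the paper leaves implicit, but it is not a different argument.
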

\begin{proof}
    We use Theorem 1.8 from \cite{McCPue} to have the convergence of the discrete sequence, and use the last point \Cref{estimatesrelativistic} to prove that the modulus of continuity of $\log(\p_0)$ is propagated. Indeed, for the constant in time interpolation $\p^h$, we have for all $t \geq 0$
    \begin{align*}
        |\p^h (t,x) - \p(t,y) | \leq \omega (|x-y|), 
    \end{align*}
    and we can pass to the limit $h \to 0$.
\end{proof}
\bibliographystyle{plain}
\bibliography{biblio}
\end{document}